\newtheorem{theorem}{Theorem}[section]
\newtheorem{proposition}[theorem]{Proposition}
\theoremstyle{definition}
\newenvironment{example}
{\pushQED{\qed}\examplex}
{\popQED\endexamplex}
\def\Ddots{\mathinner{\mkern1mu\raise\p@
\vbox{\kern7\p@\hbox{.}}\mkern2mu
\raise4\p@\hbox{.}\mkern2mu\raise7\p@\hbox{.}\mkern1mu}}
\newcommand{\PP}{\mathbb{P}}
\newcommand{\RR}{\mathbb{R}}
\newcommand{\CC}{\mathbb{C} }
\title{\bf Likelihood Geometry of  \\ Determinantal Point Processes}
\author{Hannah Friedman, Bernd Sturmfels and Maksym Zubkov}
\date{}
\begin{document}
\maketitle

\begin{abstract}
  \noindent
 We study  determinantal point processes (DPP) through the lens of algebraic statistics.
We count the critical points of the log-likelihood function, and we compute them 
for small models, thereby disproving a
conjecture of Brunel, Moitra, Rigollet and Urschel.
\end{abstract}

\section{Introduction}

The determinantal point process (DPP) for discrete random variables
is a statistical model whose states are the subsets of a finite set
$[n] = \{1,2,\ldots,n\}$. This model is ubiquitous in probability theory, 
statistical physics, algebraic combinatorics, and machine learning \cite{Bor, Kul}.

This article
offers a study from the perspective of
algebraic statistics \cite{DSS}. 
Our motivation is the work on likelihood inference by
Brunel, Moitra, Rigollet and Urschel  \cite{BMRU}.
We shall answer their question  \cite[Conjecture 12]{BMRU} about
critical points of the log-likelihood function.

The DPP model is a semialgebraic set $\mathcal{M}_n$
of dimension $\binom{n+1}{2}$ in the simplex $\Delta_{2^n-1}$
whose points are the probability distributions on $2^{[n]}$.
The model $\mathcal{M}_n$ is parameterized by 
positive-definite symmetric $n \times n$ matrices $\Theta = (\theta_{ij})$.
In our model, the probability of observing a subset $I$
is proportional to the principal minor ${\rm det}(\Theta_I)$ indexed by that subset.
In symbols, 
\begin{equation}
\label{eq:DPPpara}
p_I \,\, = \,\, {\rm det}(\Theta_I)/ Z \qquad {\rm for} \,\, I \subseteq [n]. 
\end{equation}
Here $\det(\Theta_\emptyset) = 1$, and
the partition function is given by the sum of all $2^n$ principal minors:
\begin{equation}
\label{eq:ZZZ}
 Z \,\, = \,\,\sum_{I \subseteq [n]} \det(\Theta_I) \, \, = \,\, {\rm det}(\Theta + {\rm Id}_n),
 \end{equation}
where ${\rm Id}_n$ denotes the identity matrix of size $n \times n$.

We write $\mathcal{V}_n$ for the Zariski closure of $\mathcal{M}_n$ in
the complex projective space $\PP^{2^n-1}$. The model $\mathcal{M}_n$
and the variety $\mathcal{V}_n$ are cut out by the
{\em hyperdeterminantal ideal} that was studied by
Holtz-Sturmfels \cite{HS} and Oeding \cite{Oed}.
In the first non-trivial case $n=3$, our model is the zero set (in $\Delta_7$ or in $\PP^7$) of the
hyperdeterminant of format $2 \times 2 \times 2$, which is the quartic
\begin{equation}
\label{eq:hyperdet} \begin{matrix}
{\rm Det} \, = \,
  p_{000}^2p_{111}^2
+p_{001}^2 p_{110}^2
+p_{011}^2 p_{100}^2
+p_{010}^2 p_{101}^2
+ 4 p_{000} p_{011} p_{101} p_{110}
+4 p_{001} p_{010} p_{100} p_{111} &
\\
-\,2 p_{000} p_{001} p_{110} p_{111}
-2 p_{000} p_{010} p_{101} p_{111}
-2 p_{000} p_{011} p_{100} p_{111}  &
\\
-\,2 p_{001} p_{010} p_{101} p_{110}
-2 p_{001} p_{011} p_{100}    p_{110}
-2 p_{010} p_{011} p_{100} p_{101}. 
 \end{matrix}
\end{equation}
In (\ref{eq:hyperdet}), binary strings of length $3$ represent
subsets of $[3] = \{1,2,3\}$. For $n \geq 4$, one takes 
all occurrences \cite{Oed} of the hyperdeterminant (\ref{eq:hyperdet}) in
a tensor of format $2 {\times} 2 \times {\cdots} \times 2$ to cut out~$\mathcal{V}_n$.
A different representation of the variety $\mathcal{V}_n$ was found  by
Al Ahmadieh and Vinzant~\cite{AV}.

Data for the model $\mathcal{M}_n$ are given by a nonnegative
integer vector $u = (u_I: I \subseteq [n])$,
or equivalently, by a contingency table for $n$ binary states.
The log-likelihood function equals
\begin{equation}
\label{eq:loglikep}
L_u \,\, = \,\, \sum_{I \subseteq [n]} u_I \cdot {\rm log}(p_I) \,\,-\,\,
\sum_{I \subseteq [n]} u_I  \cdot {\rm log}\bigl(\sum_{I \subseteq [n]} p_I \bigr).
\end{equation}
In this formula, the $p_I$ are unknowns that serve as homogeneous coordinates
on the complex projective space $\PP^{2^n-1}$.
Note that $L_u$ is a multivalued function on $\PP^{2^n-1}$.
We consider~its restriction to the hyperdeterminantal variety $\mathcal{V}_n$.
The number of critical points of this restriction is  the
maximum likelihood degree (ML degree \cite{HS, HKT}). For small values of~$n$,
\begin{equation}
\label{eq:MLdegrees}
{\rm MLdegree}(\mathcal{V}_2) = 1 \quad {\rm and} \quad
{\rm MLdegree}(\mathcal{V}_3) = 13 .
\end{equation}
The number $13$ for the hyperdeterminant was computed in
\cite[Example 2.2.10]{DSS}. 
The number~$1$ arises because $\mathcal{V}_2 = \PP^3$.
But even this tiny case is interesting, as we shall see in Example \ref{ex:n=2}.

In  machine learning  \cite{BMRU, GJWX, Kul, SR},
one uses the parametric form of the log-likelihood:
\begin{equation}\label{eq:logliketheta}
  L_u \,\, = \,\, \bigl ( \sum_{I \subseteq [n]} u_I \cdot {\rm log}(\det(\Theta_I)) \bigr ) \, - \,
  \bigl(\sum_{I \subseteq [n]} u_I \bigr) \cdot \log(\det(\Theta + {\rm Id}_n)).
\end{equation}
Grigorescu, Juba, Wimmer and Xie \cite{GJWX}
showed that computing the maximum of this log-likelihood function is an NP-complete problem.
This had been conjectured by Kulesza \cite{Kul}.

The following example illustrates the
distinction between the formulations in
(\ref{eq:loglikep}) and  (\ref{eq:logliketheta}).

\begin{example}[$n=2$]  \label{ex:n=2} 
We write $u_\emptyset ,u_1,u_2,u_{12}$ for
the observed counts of subsets of~$\{1,2\}$.
The model $\mathcal{M}_2$ is given by the principal minors of a $2 \times 2 $ matrix
$\Theta =  [ \theta_{ij}] $, namely
$$ p_\emptyset = \frac{1}{Z},\,\,
 p_1 = \frac{\theta_{11}}{Z},\,\,
 p_2 = \frac{\theta_{22}}{Z} ,\,\,
 p_{12} = \frac{\theta_{11}\theta_{22}-\theta_{12}^2}{Z} , \quad
 {\rm where} \quad
  Z = \theta_{11}\theta_{22}-\theta_{12}^2+\theta_{11}+\theta_{22}+1.
$$
We view
$L_u = 
u_\emptyset {\rm log}(p_\emptyset) + u_1 {\rm log}(p_1) + u_2 {\rm log}(p_2) + 
u_{12} {\rm log}(p_{12})$
as a function of $\theta_{11},\theta_{12},\theta_{22}$.
Setting its partial derivatives to zero
gives three rational function equations in three unknowns.
The solutions $\hat \Theta$ of these equations are the critical points of $L_u$. 
A computation reveals
\begin{equation}
\label{eq:threesolutions} 
 \hat \theta_{11} = \frac{u_1}{u_\emptyset},\,
\hat \theta_{12} = \pm \frac{\sqrt{u_1 u_2{-}u_\emptyset u_{12}}}{u_\emptyset},\,
\hat \theta_{22} =  \frac{u_2}{u_\emptyset}
 \quad {\rm or} \quad
\hat \theta_{11} = \frac{u_1{+}u_{12}}{u_\emptyset {+} u_2},\,
\hat \theta_{12} = 0,\,
\hat \theta_{22} = \frac{u_2{+}u_{12}}{u_\emptyset{+}u_1}.
\end{equation}
From the implicit perspective, which was emphasized in \cite{HKT},
 there is only one solution. 
Its two preimages under the 2-1 parametrization of $\mathcal{M}_2$
are shown on the left in (\ref{eq:threesolutions}).
On the right in (\ref{eq:threesolutions}) is a ramification point of that 2-1 map.
It is not a critical point of (\ref{eq:loglikep}) on $\mathcal{V}_2 = \PP^3$.
\end{example}

This article is organized as follows.
In Section \ref{sec2}, we offer a detailed
investigation of the case $n=3$. 
In particular, we present our counterexample
to  \cite[Conjecture 12]{BMRU}.
The number of  critical points of
the parametric log-likelihood (\ref{eq:logliketheta}) is
found to be
$ 4 \cdot 13 + 2 \cdot 1 + 2 \cdot 1 + 2 \cdot 1 + 1 = 59$.
This is the $n=3$ analogue to the count
$2 \cdot 1 + 1 $ for the three solutions (\ref{eq:threesolutions})
 in Example~\ref{ex:n=2}.
 
 Section \ref{sec3} features
a general formula for counting complex critical points when
$n$ is arbitrary.
This involves contributions from all possible
block decompositions of the matrix $\Theta$.
These decompositions are called 
{\em partial decouplings} in the DPP literature.
Our Theorem \ref{thm:decoupling}
generalizes \cite[Theorem 11]{BMRU},
where it is assumed that the data vector $u$ lies on the model~$\mathcal{M}_n$.

In Section~\ref{sec4}, we apply numerical methods
to our problem.
Going well beyond~(\ref{eq:MLdegrees}),
we compute some solutions for $n=4,5$
with the software {\tt HomotopyContinuation.jl} \cite{BT}.
After replacing (\ref{eq:DPPpara}) with a birational parametrization, we
apply the monodromy method for rational likelihood equations
in \cite{ABF, ST}.
The focus is on solutions that are real and
positive-definite.

While this paper focuses primarily on the
algebraic structure and
likelihood geometry
of the DPP model,
it does have the potential to contribute to applications in 
statistics \cite{SR} and machine learning \cite{BMRU, Kul}.
The MLE problem we solve differs from the 
subset selection problem, and it can viewed
as learning the best parameters for 
a given sample of subsets.

\section{Three-by-three Matrices }
\label{sec2}

In this section, we examine the likelihood geometry of the DPP model with $n=3$.
The model parameters are the six entries $\theta_{ij}$ of the symmetric
$3 \times 3$ matrix $\Theta$. 
Fix any data vector
$\, u =  (u_\emptyset, u_1,u_2,u_3, u_{12}, u_{13}, u_{23}, u_{123} )$.
The sum of its eight coordinates~$u_I$ is the sample size of the data,
here denoted $|u|$. The
parametric log-likelihood function (\ref{eq:logliketheta}) equals
\begin{equation}
\label{eq:loglike3}
 \begin{matrix}
\!\! L_u & \!\! = \!\! & u_1\, {\rm log}( \theta_{11}) + u_2 \, {\rm log}(\theta_{22}) 
+ u_3 \, {\rm log}(\theta_{33})  
+ \,u_{12} \, {\rm log}( \theta_{11} \theta_{22} {-} \theta_{12}^2) 
+ u_{13}\, {\rm log}( \theta_{11} \theta_{33} {-} \theta_{13}^2)  \\ & & 
+ \,u_{23} \, {\rm log}( \theta_{22} \theta_{33} {-} \theta_{23}^2) 
\,+\, u_{123}\, {\rm log}({\rm det}(\Theta)) \,\,
-\,\, |u| \, {\rm log} \bigl({\rm det}(\Theta + {\rm Id}_3)\bigr).
\end{matrix}
\end{equation}
Setting the partial derivatives of $L_u$ to zero
gives six rational function equations in six unknowns.
The solutions to these equations are the critical points of $L_u$. 
Our theory in Section \ref{sec4} predicts $59$
critical points when the data vector $u$ is generic.
Among these are $4 \cdot 13 = 52$ solutions arising from
critical points of (\ref{eq:loglikep}) on 
the hyperdeterminantal hypersurface $\mathcal{V}_3$.
These critical points $\hat \Theta$ have
$\hat \theta_{ij} \not= 0$ for all $i,j$.
The clusters of four arise by multiplying
 two of the three off-diagonal entries
$\hat \theta_{ij}$ by $-1$. This does not change the
distribution $\hat p \in \mathcal{M}_3$.

The other $7 = 1 + 3(2 \cdot 1) $
critical points of (\ref{eq:logliketheta})
are extraneous, in the sense that they do not come
from  critical points of (\ref{eq:loglikep}) on $\mathcal{V}_3$.
This is analogous to the point on 
the right in~(\ref{eq:threesolutions}).
One of the
seven special solutions is the diagonal matrix
$\, \hat \Theta = {\rm diag} (\hat \theta_{11}, \hat \theta_{22}, \hat \theta_{33})$, where
\begin{equation}
\label{eq:diagonals}
\hat \theta_{11} = \frac{u_1 \! +\! u_{12} \! +\!  u_{13} \! +\! u_{123}}{
u_\emptyset + u_{2} + u_{3} + u_{23}},\,\,\,
\hat \theta_{22} = \frac{u_2 \! +\! u_{12} \! +\!  u_{23} \! +\! u_{123}}{
u_\emptyset + u_{1} + u_{3} + u_{13}}, \,\,\,
\hat \theta_{33} = \frac{u_3 \! +\! u_{13} \! +\!  u_{23} \! +\! u_{123}}{
u_\emptyset + u_{1} + u_{2} + u_{12}}. 
\end{equation}
The other six special solutions come in three pairs, one for each 
decomposition of a $3 \times 3$ matrix into a $2 \times 2 $ block
and a $1\times 1$ block. One such pair of critical points has the form
\begin{equation}
\label{eq:block21a} \hat \Theta = 
\begin{small} \begin{bmatrix}
\hat \theta_{11} & \hat \theta_{12} & 0  \\
\hat \theta_{12} & \hat \theta_{22} & 0 \\
0 & 0 & \hat \theta_{33} \end{bmatrix}, \end{small}
\end{equation}
where $\hat \theta_{33}$ is the expression on the right in (\ref{eq:diagonals}), 
and the other three entries in (\ref{eq:block21a}) are 
\begin{equation}
\label{eq:block21b}
\hat \theta_{11} =  \frac{u_1+u_{13}}{u_\emptyset + u_3},\,\,
\hat \theta_{12} = \, \pm \frac{\sqrt{(u_1{+}u_{13})(u_2{+}u_{23})- (u_\emptyset{+}u_3)
(u_{12}{+}u_{123})} }{u_\emptyset + u_3},\,\,
\hat \theta_{22} = \frac{u_2+u_{23}}{u_\emptyset + u_3}.
\end{equation}

We now move away from the hypothesis that the data are generic.
Namely, we make the assumption that $u$ lies on the variety $\mathcal{V}_n$.
In fact, we even assume that $\frac{1}{u_\emptyset} u $  lies in the
model, meaning that it
is the vector of principal minors of
some real symmetric $n \times n$ matrix.
This is the standing assumption on the data in the article \cite{BMRU}
to which we shall turn shortly.

\begin{example}[Data in the model] \label{ex:datainmodel}
We consider the following data for $n=3$:
\begin{equation}
\label{eq:onmodeldata}
    (u_\emptyset, u_{1}, u_2, u_3, u_{12}, u_{13}, u_{23}, u_{123}) \,\,= \,\,
    (1, 8, 22, 18, 151, 135, 360, 2412)
\end{equation}
This vector lies in the model because its entries are the principal minors of
any of the matrices
\begin{equation}
\label{eq:tautological} \begin{small}
    \begin{bmatrix}
    \,  8 & 5  & 3\\
    \,  5 & 22 & 6\\
    \,  3 & 6  & 18 \, 
    \end{bmatrix},\,\,
        \begin{bmatrix}
      \phantom{-} 8 \! &\! -5  \!&\! -3\\
      -5 \!&\! \phantom{-} 22 \! & \!\phantom{-} 6\\
      -3 \! &\! \phantom{-}6  \! &\! \phantom{-}18 \, 
    \end{bmatrix},\,\,
    \begin{bmatrix}
    \, \phantom{-} 8 \! & \! -5  \! & \!  \phantom{-} 3\\
    \,  -5 \! & \!  \phantom{-} 22 \! & \! -6\\
    \,   \phantom{-} 3 \! & \! -6  \! & \!  \phantom{-} 18 \, 
    \end{bmatrix},\,\,
    \begin{bmatrix}
    \,   \phantom{-} 8 \! & \!  \phantom{-}  5  \! & \! -3\\
    \,   \phantom{-} 5 \! & \!  \phantom{-} 22 \! & \! -6\\
    \,  -3 \! & \! -6  \! & \!  \phantom{-} 18 \, 
    \end{bmatrix}. \end{small}
\end{equation}
   By construction, these are the  four global maxima of the log-likelihood function 
in (\ref{eq:loglike3}), and they map to the global maximum of (\ref{eq:loglikep})
on $\mathcal{M}_3$.
Among the other $12$ complex critical points on $\mathcal{V}_3$,
six are real and lie on $\mathcal{M}_3$. Four of these correspond to the positive-definite matrices
  \begin{align*}
    \begin{footnotesize}
      \begin{matrix} 
        \hat \theta_{11} =  7.72799090116006 & \hat \theta_{12} =  4.14366972540362\\
        \hat \theta_{13} =  1.87300176302618 & \hat \theta_{22} =  20.1464857136673\\
        \hat \theta_{23} =  0.82526924316919 & \hat \theta_{33} =  16.4735825997691
      \end{matrix}
    \end{footnotesize}
    \quad && \quad
    \begin{footnotesize}
      \begin{matrix} 
        \hat \theta_{11} = 6.92478592243203 & \hat \theta_{12}=  0.42796700405714\\
        \hat \theta_{13} = 1.80374923458180 & \hat \theta_{22} =  19.2531487326101\\
        \hat \theta_{23} = 4.44778298807768 & \hat \theta_{33} =  17.4175047796638
      \end{matrix}\,
    \end{footnotesize}
    \smallskip \\
    \begin{footnotesize}
      \begin{matrix} 
        \hat \theta_{11} = 7.56880693316022  & \hat \theta_{12} = 4.28496510066628 \\
        \hat \theta_{13} = 0.86306207237349 &  \hat  \theta_{22} = 21.3776618810445 \\
        \hat \theta_{23} = 4.79253523095731 & \hat  \theta_{33} = 17.0982120638953
      \end{matrix}
    \end{footnotesize}
    \quad && \quad
    \begin{footnotesize}
      \begin{matrix} 
        \hat \theta_{11} = 7.57820456385679 & \hat \theta_{12} = -3.8397212783772\\
        \hat \theta_{13} = 0.98046698151082 & \hat \theta_{22} = 20.9281938578911\\
        \hat \theta_{23} = 3.86656494286390 & \hat \theta_{33} = 17.1007249363163.
      \end{matrix}
    \end{footnotesize}
      \end{align*}
In addition to these,
 the parametric log-likelihood (\ref{eq:loglike3}) has seven more critical points
with a block structure.  These are obtained by substituting
(\ref{eq:onmodeldata}) into the formulas
(\ref{eq:diagonals}), (\ref{eq:block21a}), (\ref{eq:block21b}).
\end{example}

We now turn to the approach of
Brunel, Moitra, Rigollet and Urschel \cite{BMRU},
and we present a counterexample to 
\cite[Conjecture 12]{BMRU}, which states that
there are no critical points other than those
obtained from {\em partial decouplings}.
Partial decouplings correspond to the
block decompositions we saw in (\ref{eq:threesolutions}),
(\ref{eq:diagonals}) and (\ref{eq:block21a}).
We discuss these in Section \ref{sec3} for general $n$.

The set-up in \cite{BMRU}
uses the parametric form (\ref{eq:logliketheta}) of the log-likelihood,
i.e.~$L_u$ is a function on the cone
of positive-definite $n \times n$ matrices.
Furthermore, \cite{BMRU} assumes
that the data vector $u$ is sampled from the model $\mathcal{M}_n$.
When $u$ is given by the principal minors of
some  positive-definite $n \times n$ matrix,
\cite[Theorem 11]{BMRU} shows that all partial decouplings
are critical points of $L_u$. 
This includes the
empirical distribution $\hat p = \frac{1}{|u|} u $,
which is the only critical point from partial decouplings
with full support.
There are exponentially many other such
critical points, and the question is whether these are all.
We show that the answer is negative.

\begin{proposition} 
For $n=3$, the log-likelihood function in $\Theta$  given by some $u \in \mathcal{M}_3$
has critical points that do not correspond to partial decouplings.
This resolves \cite[Conjecture 12]{BMRU}.
\end{proposition}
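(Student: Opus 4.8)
The plan is to exhibit, for one explicit data vector $u \in \mathcal{M}_3$, a positive-definite critical point of (\ref{eq:loglike3}) that is not a partial decoupling. The first step is to record what partial decouplings look like when $n=3$. Up to relabeling $\{1,2,3\}$, the matrix of a partial decoupling is block-diagonal, hence either diagonal, or built from one $2\times 2$ block and one $1\times 1$ block, or the full symmetric $3\times 3$ matrix. In the first two cases at least one off-diagonal entry $\hat\theta_{ij}$ vanishes; these give exactly the seven ``extraneous'' critical points described by (\ref{eq:diagonals}), (\ref{eq:block21a}) and (\ref{eq:block21b}). In the last case, when $u$ lies on $\mathcal{M}_3$, \cite[Theorem 11]{BMRU} produces the empirical distribution $\hat p = \tfrac{1}{|u|}u$, whose preimages under the parametrization (\ref{eq:DPPpara}) are the four matrices in (\ref{eq:tautological}). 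Thus the only partial-decoupling critical point with no vanishing off-diagonal entry is the empirical one, realized by those four matrices. So it suffices to produce a single critical point $\hat\Theta$ of (\ref{eq:loglike3}) that is positive-definite, has every off-diagonal entry nonzero, and is not among the matrices (\ref{eq:tautological}).

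For this I would use the on-model data $u = (1,8,22,18,151,135,360,2412)$ from (\ref{eq:onmodeldata}); it lies on $\mathcal{M}_3$ because it is the vector of principal minors of the positive-definite integer matrix displayed on the left in (\ref{eq:tautological}) (a direct evaluation of the eight minors). I would then solve the likelihood equations of the hyperdeterminantal hypersurface $\mathcal{V}_3$ at this $u$: by (\ref{eq:MLdegrees}) there are $13$ critical points of (\ref{eq:loglikep}) on $\mathcal{V}_3$, one of them being the MLE $\hat p = \tfrac{1}{|u|}u$. The remaining $12$ can be obtained with the monodromy/homotopy method of Section \ref{sec4}, started from the $13$ solutions at a generic data vector, or by a Gr\"obner-basis solution of the likelihood equations of the quartic ${\rm Det}$ in (\ref{eq:hyperdet}) specialized at $u$. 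Among those $12$, six are real and lie on $\mathcal{M}_3$, and four of them correspond to positive-definite matrices with all entries nonzero, as recorded in Example \ref{ex:datainmodel}. Let $\hat\Theta$ be any one of them, say the one with $\hat\theta_{12}\approx 4.144$. Since $\hat\Theta$ maps to a critical point of (\ref{eq:loglikep}) on $\mathcal{V}_3$ with no vanishing coordinate, the discussion in Section \ref{sec2} shows that $\hat\Theta$ is itself a critical point of the parametric log-likelihood (\ref{eq:loglike3}).

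It remains to verify that $\hat\Theta$ is not a partial decoupling. It is positive-definite with all off-diagonal entries nonzero, so by the first paragraph it could only be one of the four matrices in (\ref{eq:tautological}); but those are the preimages of the empirical distribution $\tfrac{1}{|u|}u$, and $\hat\Theta$ maps to a different point of $\mathcal{M}_3$, its coordinates being numerically distinct. Hence $\hat\Theta$ is a critical point of the log-likelihood (\ref{eq:logliketheta}) in $\Theta$ given by the on-model data $u$ that does not come from a partial decoupling, which refutes \cite[Conjecture 12]{BMRU}.

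The one step that is not routine is making all of this rigorous: one must be certain that the computed $\hat\Theta$ is a genuine critical point and genuinely differs from the matrices (\ref{eq:tautological}), rather than being an artifact of floating-point error. I would settle this with a certificate. One option is interval arithmetic on the gradient of (\ref{eq:loglike3}) --- for instance the {\tt certify} routine of {\tt HomotopyContinuation.jl} \cite{BT}, which returns a box provably containing a true solution and disjoint from the boxes around (\ref{eq:tautological}). A second, fully symbolic option is to note that the critical ideal of $L_u$ for this integer $u$ is zero-dimensional; a rational univariate representation then exhibits $\hat\Theta$ over an algebraic number field, from which positive-definiteness, the non-vanishing of $\hat\theta_{12}$, and the inequality $\hat p \neq \tfrac{1}{|u|}u$ all follow by exact sign computations. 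Either route is a finite and verifiable computation.
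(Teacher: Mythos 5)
Your proposal is correct and follows essentially the same route as the paper: the paper's proof simply invokes Example \ref{ex:datainmodel}, i.e.\ the on-model data $u=(1,8,22,18,151,135,360,2412)$, whose likelihood equations have positive-definite, fully supported critical points distinct from the four matrices in (\ref{eq:tautological}), and hence not partial decouplings. Your added observations---that the only full-support decoupling is the empirical distribution, and that the numerics should be certified (e.g.\ via {\tt certify})---are consistent with what the paper does in Sections \ref{sec2} and \ref{sec4}.
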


\begin{proof}
The proof is furnished by Example \ref{ex:datainmodel}.
The log-likelihood function  $L_u$
for  $u$ in (\ref{eq:onmodeldata})
 has $28$  fully-supported real critical points, $20$ of which are positive-definite.
 The four matrices in (\ref{eq:tautological})
 are the global maxima.
 Below that, we show matrix representatives for four positive-definite critical points
  not corresponding to partial decouplings.
\end{proof}

\section{Partial Decouplings}
\label{sec3}

We saw that some of the critical points
of the parametric log-likelihood (\ref{eq:logliketheta})
are matrices $\hat \Theta$ with a block decomposition.
These were called partial decouplings in \cite{BMRU}.
Such critical points were characterized in \cite[Theorem 11]{BMRU}, 
under the hypothesis that
the data vector $u$ lies in the model $\mathcal{M}_n$.
In what follows we offer a generalization of that result.
We no longer assume $u \in \mathcal{M}_n$. From now on,
we allow  $u = (u_I:  I \subseteq [n])$ to be any complex
vector of length $2^n$.
If $ u$ is generic, then all critical points of (\ref{eq:loglikep})
on $\mathcal{V}_n$ have fully supported preimages under the principal minor map.
We write $\mu_n$ for the ML degree of the projective variety $\mathcal{V}_n$.
We know from (\ref{eq:MLdegrees}) that
 $\mu_1 = \mu_2 = 1$ and $ \mu_3 = 13$.
 In the next section we shall show that
     $\mu_4 = 3526$.
  
 Recall that a {\em set partition} of $[n] = \{1,2,\ldots,n\}$ is a set
$\pi = \{\pi_1,\ldots,\pi_k\}$, where
the $\pi_i$ are non-empty pairwise disjoint
subsets of $[n]$ whose union equals $[n]$.
We write $\mathcal{P}_n$ for the set of all set partitions of $[n]$.
The cardinality $|\mathcal{P}_n|$ is the {\em Bell number}, which equals
$2, 5, 15, 52, 203, 877, \ldots$ for $n=2,3,4,5,6,7$. See Examples \ref{ex:part3}
and \ref{ex:part4} for the cases $n=3,4$.

\begin{theorem} \label{thm:decoupling}
The critical points $\hat \Theta$  of the parametric log-likelihood function $L_u$ 
in (\ref{eq:logliketheta}) are
found by solving various likelihood equations on submodels $\mathcal{M}_r$ for $r \leq n$. 
If $u$ is generic, in the sense of algebraic geometry, then
the total number of complex critical points of $L_u$ equals
\begin{equation}
\label{eq:partitionsum} \sum_{\pi \in \mathcal{P}_n}
\prod_{i=1}^{|\pi|} \bigl( \,2^{|\pi_i|-1} \,\mu_{|\pi_i|} \,\bigr).  \end{equation}
\end{theorem}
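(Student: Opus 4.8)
The plan is to analyze the critical equations for $L_u$ in (\ref{eq:logliketheta}) by classifying the block structure of a critical matrix $\hat\Theta$. First I would observe that the off-diagonal entries $\theta_{ij}$ enter $L_u$ only through the minors $\det(\Theta_I)$ with $\{i,j\}\subseteq I$, and through $\det(\Theta+\mathrm{Id}_n)$. When $u$ is generic, I expect the \emph{only} way a vanishing can be forced is via the symmetry $\theta_{ij}\mapsto -\theta_{ij}$: there is no generic reason for a genuinely nonzero off-diagonal entry to be zero. So the key dichotomy for each pair $\{i,j\}$ is whether $\hat\theta_{ij}=0$ or not. Define a graph $G$ on $[n]$ with an edge $\{i,j\}$ whenever $\hat\theta_{ij}\neq 0$; I would argue that at a generic critical point the connected components of $G$ induce a block-diagonal decomposition of $\hat\Theta$ (possibly after reordering), i.e.\ a set partition $\pi\in\mathcal P_n$, and that across distinct blocks all entries vanish. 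The content here is showing that an \emph{irreducible} critical point — one where $G$ is connected — has all off-diagonal entries nonzero; this is where the genericity of $u$ is used, and it is essentially the statement that the preimage of a critical point of (\ref{eq:loglikep}) on $\mathcal V_n$ under the principal minor map is fully supported, combined with the remark in the excerpt that $\hat\theta_{ij}\neq 0$ for all $i,j$.

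Next I would establish the \textbf{multiplicativity} over blocks. Suppose $\hat\Theta=\mathrm{diag}(\Theta^{(1)},\dots,\Theta^{(k)})$ with block index sets $\pi_1,\dots,\pi_k$. Then $\det(\Theta_I)=\prod_t \det\big((\Theta^{(t)})_{I\cap\pi_t}\big)$ and $\det(\Theta+\mathrm{Id}_n)=\prod_t\det(\Theta^{(t)}+\mathrm{Id}_{|\pi_t|})$, so $L_u$ splits as a sum of parametric log-likelihoods, one per block, where the data vector for block $t$ is the \emph{marginalized} vector $u^{(t)}_J = \sum_{I:\ I\cap\pi_t = J} u_I$ for $J\subseteq\pi_t$. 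Crucially, if $u$ is generic in $\mathbb C^{2^n}$, then each marginal $u^{(t)}$ is generic in $\mathbb C^{2^{|\pi_t|}}$ (the marginalization map is a surjective linear map, so it pushes a generic point to a generic point). Hence the number of critical points $\hat\Theta$ with block structure exactly $\pi$, \emph{counted with all off-diagonal entries of each block nonzero}, is the product over $t$ of the number of fully-supported-preimage critical points of $L_{u^{(t)}}$ on $\mathcal M_{|\pi_t|}$. By the sign symmetry $\theta_{ij}\mapsto -\theta_{ij}$ within a block of size $r$ (which acts freely, giving a $2^{r-1}$-to-$1$ cover onto the distributions, since conjugating by $\mathrm{diag}(\pm1,\dots,\pm1)$ has a $2$-element kernel), this count equals $2^{r-1}\mu_r$ with $r=|\pi_t|$. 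Summing over all $\pi\in\mathcal P_n$ gives exactly (\ref{eq:partitionsum}).

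To make the counting rigorous I would set it up as a stratification: the variety of critical points of $L_u$ (for generic $u$) is a finite reduced scheme, and it is partitioned by the block-structure type $\pi$. Within the stratum indexed by $\pi$ the critical equations decouple into independent systems on the $\mathcal M_{|\pi_i|}$; one must check that the equations coming from the off-block entries $\theta_{ij}$ with $i\in\pi_s$, $j\in\pi_t$, $s\neq t$, evaluated at $\theta_{ij}=0$, are automatically satisfied (the partial derivative $\partial L_u/\partial\theta_{ij}$ vanishes at $\Theta$ block-diagonal because every term linear-or-higher in $\theta_{ij}$ drops out, so $\theta_{ij}=0$ is a critical value in that coordinate) — and that, conversely, no critical point has its graph $G$ disconnected in a way incompatible with $\Theta$ being block-diagonal up to permutation (any two components of $G$ give index sets with all cross entries zero, which is exactly block-diagonality). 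The finiteness and the "all solutions have fully supported block-minors for generic $u$" claim are inherited from the $\mathcal V_r$ ML-degree being well-defined, i.e.\ from \cite{HKT, HS}.

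The main obstacle I anticipate is the \textbf{transversality/genericity bookkeeping}: proving that for generic $u$ every complex critical point of $L_u$ actually has block-diagonal form (after permuting coordinates) rather than some degenerate configuration where, say, a proper principal minor vanishes identically or an off-diagonal entry of a connected component vanishes. Equivalently, one must rule out critical points lying on the boundary strata where some $\det(\Theta_I)=0$ but $\Theta$ is not block-diagonal. The cleanest route is a dimension count: the locus in $\Theta$-space where $L_u$ fails to be locally analytic (some $\det(\Theta_I)=0$ and not for block-structure reasons) is a proper subvariety, and a standard argument — varying $u$ and using that the incidence variety $\{(\Theta,u): \nabla_\Theta L_u = 0\}$ projects dominantly with generically finite fibers onto $u$-space — shows that for generic $u$ no critical point lands there, except on the block-diagonal strata which contribute the marginalized sub-problems. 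I would also double-check the $2^{r-1}$ factor against the base cases: $n=2$ gives $2^{1}\mu_2 + (2^0\mu_1)^2 = 2 + 1 = 3$, matching (\ref{eq:threesolutions}), and $n=3$ gives $2^2\mu_3 + 3(2^1\mu_2\cdot 2^0\mu_1) + (2^0\mu_1)^3 = 52 + 6 + 1 = 59$, matching the count stated in the introduction, which is a reassuring consistency check on both the formula and the combinatorial setup.
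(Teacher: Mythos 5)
Your architecture is the same as the paper's: stratify critical points by the block structure (set partition) of their support, split $L_u$ additively over blocks using the marginalized data vectors $v^{(t)}_J=\sum_{I\cap\pi_t=J}u_I$, multiply the per-block counts $2^{|\pi_t|-1}\mu_{|\pi_t|}$ coming from the sign-conjugation orbit over the implicit critical points, and check that block matrices built this way really are critical for the full $L_u$. The marginal-genericity remark, the $2^{r-1}$ covering argument, and the consistency checks for $n=2,3$ all match the paper.

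The one genuine weak point is your justification of the key lemma that $\partial L_u/\partial\theta_{ij}$ vanishes at a block-diagonal matrix when $i$ and $j$ lie in different blocks. You write that ``every term linear-or-higher in $\theta_{ij}$ drops out,'' but that is not an argument: the term of $\det(\Theta_I)$ that is linear in $\theta_{ij}$ contributes its \emph{coefficient} to the derivative at $\theta_{ij}=0$, and that coefficient does not vanish for free. What must be shown is that this coefficient lies in the ideal generated by the \emph{other} cross-block entries $\theta_{kl}$, $k\in K$, $l\in L$, and hence vanishes on the block-diagonal locus. The paper proves exactly this by a monomial-ideal computation: every monomial of $\partial\det(\Theta)/\partial\theta_{ij}$ comes from a permutation $\sigma$ with $\sigma(i)=j$, and since $\sigma$ is a bijection sending $i\in K$ into $L$, it must also send some $l\in L$ to some $k\in K$, so the monomial is divisible by $\theta_{kl}$ with $k,l$ in opposite blocks; this identifies $\langle \theta_{kl}: k\in K,\ l\in L\rangle$ as an associated prime of the ideal of monomials of the derivative, and the inclusion (\ref{eq:idealintersection}) follows. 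A shortcut you could have used instead: conjugation by $\mathrm{diag}(\pm 1)$ with sign $+1$ on $K$ and $-1$ on $L$ preserves all principal minors and negates all cross-block entries simultaneously, so $L_u$ restricted to fixed within-block entries is an even function of the cross-block entries, and its gradient at the origin of that slice vanishes. Either fix closes the gap; as written, your one-line reason is false, and this is precisely the step where the paper spends its main technical effort.
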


Given a partition $\pi = \{\pi_1,\ldots,\pi_k\}$, the upper limit $|\pi| = k$ is the number of parts of $\pi$. 
The phrase ``generic in the sense of algebraic geometry''
means that there exists a proper subvariety in the data space $\RR^{2^n}$
such that the statement holds for all vectors $u$ outside that variety.
In particular, it holds with probability one for a randomly selected vector $u \in \RR^{2^n}$.

\begin{example}\label{ex:part3}
For $n=3$, we have
$ \mathcal{P}_3 = \bigl\{ 
\{1,2,3\}, \{12,3\}, \{13,2\}, \{23,1\}, \{123\} \bigr\}$.
Hence the number  (\ref{eq:partitionsum}) of critical points equals
$ 1 \cdot 1 \, + \, 2 \cdot 1 \, + \, 2 \cdot 1 \,+ \, 2 \cdot 1 \, + \,4 \cdot 13 \,\, = \,\, 59$.
These $59$ solutions were described in Section \ref{sec2}, with an
explicit numerical instance in Example \ref{ex:datainmodel}.
\end{example}

\begin{example}\label{ex:part4}
For $n=4$, there are $28,441$ critical points. The sum (\ref{eq:partitionsum}) 
is over the $15$ set partitions of $[4]$. The biggest summand is
 $\,2^{4-1} \cdot 3526 = 28,208$, for $\pi = \{1234\}$
 with $k=1$. The partitions with $k \geq 2$ contribute the summands
$52,52,52,52,4,4,4,2,2,2,2,2,2,1$.
\end{example}

\begin{proof}[Proof of Theorem \ref{thm:decoupling}]
We fix one partition $\pi = \{\pi_1,\ldots,\pi_k\}$ in $\mathcal{P}_n$.
Suppose that $\Theta$ is a symmetric $n \times n$ matrix
that has the block structure $\pi$, so some of the entries are zero.
However, all nonzero entries of $\Theta$ are distinct unknowns.
We write $\Theta = \Theta_{\pi_1} \oplus \Theta_{\pi_2} \oplus\, \cdots \,\oplus \Theta_{\pi_k}$.

Let $L_u(\Theta)$ denote the evaluation of the
log-likelihood function at the block matrix $\Theta$. Then,
$L_u(\Theta)$ is a function in $\sum_{i=1}^k \binom{|\pi_i|+1}{2}$ unknowns,
namely the entries of the $k$ blocks $\Theta_{\pi_i}$.
Assuming $u$ to be generic, we count critical points
$\hat \Theta$ for which all entries in the blocks $\hat \Theta_{\pi_i}$ are nonzero.
We claim that the total number of these critical points is equal to
\begin{equation}
\label{eq:proposedcount}
    \prod_{i = 1}^k \bigl (2^{|\pi_i| - 1} \mu_{|\pi_i|} \bigr ).
\end{equation}

Our $\pi$-restricted log-likelihood function admits an additive decomposition
\begin{equation}
\label{eq:additivedecomp}
    L_u(\Theta) \,\,=\,\, \sum_{i = 1}^k L_{v^{(i)}}(\Theta_{\pi_i}).
\end{equation}
  Here $v^{(i)}$ is a vector in $\RR^{2^{|\pi_i|}}$, indexed by subsets of $\pi_i$,
  that is obtained from $u$ by a linear transformation.
    To see this, we use the following identity for the minors of our block matrix:
  \begin{align*}
    \log(\det(\Theta_I)) \,\,=\,\, \log\bigl( \,\prod_{i = 1}^k \det(\Theta_{I \cap \pi_i}) \, \bigr)
    \,\, = \,\,\sum_{i = 1}^k \log(\det(\Theta_{I \cap \pi_i}))
    \qquad \hbox{for all} \,\, I \subseteq [n]. 
  \end{align*}
  The analogous decomposition holds for the log-partition function
$\,{\rm log}(Z)=  \log(\det(\Theta + {\rm Id}_n))$.
  From this we conclude that (\ref{eq:additivedecomp}) holds
  if we define the restricted data vector $v^{(i)}$ as follows:
\begin{equation}
\label{eq:definev}
      v^{(i)}_J  \,\, = \,\, \sum \bigl\{ u_I : I \subseteq [n] 
  \,\,{\rm and} \,\, I \cap \pi_i = J \bigr\} 
\quad  \hbox{ for all $J \subseteq \pi_i$}. 
\end{equation}

The number of fully supported critical points of
$L_{v^{(i)} }(\Theta_{\pi_i})$ is equal to
$2^{|\pi_i|-1} \mu_{|\pi_i|}$. Indeed, the data
vector $v^{(i)}$ is still generic, and we are computing
critical points on the variety $\mathcal{V}_{|\pi_i|}$.
Each critical point on  $\mathcal{V}_{|\pi_i|}$
comes from a cluster of $2^{|\pi_i|-1}$
critical matrices $\hat \Theta_{\pi_i}$.
Since the summands in (\ref{eq:additivedecomp})
involve disjoint sets of unknowns, these critical points
combine for $i=1,\ldots,k$. Therefore, the total number of
critical points of $L_u(\Theta)$ is the product in (\ref{eq:proposedcount}).

The next step  is to show
that the points above are critical points of $L_u(\Theta)$,
where $\Theta$ is now an $n {\times} n$ matrix with all $\binom{n+1}{2}$
entries distinct unknowns. 
To see this, consider the partial derivative of $L_u(\Theta)$
with respect to any off-diagonal parameter $\theta_{ij}$.
This partial derivative is
an element of the ring $R$ 
that is obtained by localizing
 the polynomial ring $\RR[\Theta]$ at the
product of $Z$ and all principal minors of $\Theta$.
We claim that that
$\partial L_u/ \partial \theta_{ij}$
lies in the following 
ideal of $R$, where the intersection is over all 
$2^{n-2}$ partitions $K \cup L = [n]$
with $i \in K$ and~$j \in L$:
\begin{equation}
\label{eq:idealintersection}
\partial L_u/ \partial \theta_{ij} \,\,\,\in 
\bigcap_{K \ni i, L \ni j} \, \bigl\langle\, \theta_{kl} \,: \,k \in K, l \in L \bigr\rangle.
\end{equation}

Let $\mathcal{I}$ be the ideal generated by the monomials in $\partial {\rm det}(\Theta)/\partial \theta_{ij}$.
We claim that the ideals on the right side of
(\ref{eq:idealintersection}) are associated primes of $\mathcal{I}$.
  Namely, $\langle\, \theta_{kl} \,: \,k \in K, l \in L \rangle$  $= (\mathcal I  : a)$ where $a = \prod \{\theta_{h_1, h_2} \, : \, (h_1,h_2) \in   K {\times} K \cup L {\times} L\}$.  
Since $\mathcal I$ is a monomial ideal, it suffices to show the
following: if $m$ is a monomial, then there exists a monomial in $\partial {\rm det}(\Theta)/\partial \theta_{ij}$ which divides $ma$  if and only if $\theta_{kl}$ divides $m$ from some $k \in K$ and $l \in L$.
Every monomial in the determinant has the form $ s= \prod_{h=1}^n \theta_{h\sigma(h)}$ for some permutation $\sigma$.
  Suppose that $\partial s/ \partial \theta_{ij}$ is nonzero and divides $ma$.
  Then $\sigma(i)$ must be $j$.
  Since $i \in K$ and $j \in L$, for $\sigma$ to be a bijection, there must be some $k \in K$ and $l \in L$ such that $\sigma(l) = k$.
  Since $\theta_{l k}$ cannot divide $a$, but $\theta_{l k}$ divides $\partial s/ \partial \theta_{ij}$, which, in turn, divides $ma$, it follows that $\theta_{l k} = \theta_{kl}$ divides $m$. 

  Now suppose $\theta_{kl}$ divides $m$ and let $s = \prod_{h=1}^n \theta_{h\sigma(h)}$ where $\sigma = (i\ j\ l\ k)$ if $k \neq i$ and $l \neq j$, $\sigma = (i\ j\ l)$ if $k = i$ and $l \neq j$, $\sigma = (i\ j\ k)$ if $k \neq i$ and $l = j$, and $\sigma = (i\ j)$ if $k = i$ and $l = j$. 
  Up to scaling,  $\partial s/\partial \theta_{ij} = s/\theta_{ij}$, which divides $ma$, as $\theta_{kl}$ divides $m$ and $s/(\theta_{ij}\theta_{k l})$ divides $a$.
  This concludes the proof of our claim that $\,\langle\, \theta_{kl} \,: \,k \in K, l \in L \rangle$  $= (\mathcal I  : a)$.
  
The same statement holds when $\Theta$ is replaced by $\Theta+{\rm Id}_n$
or any principal submatrix $\Theta_I$.
Since $\partial L_u/ \partial \theta_{ij}$ is in the ideal of these determinants,
we have established the inclusion~(\ref{eq:idealintersection}).

Now, fix any set partition $\pi \in \mathcal{P}_n$,
and suppose that $i$ and $j$ lie in distinct blocks of $\pi$.
The partial derivative $\partial L_u/ \partial \theta_{ij} $
vanishes identically when the full matrix $\Theta$ is replaced by the block matrix
$\,\Theta_{\pi_1} \oplus\, \cdots \,\oplus \Theta_{\pi_k}$.
This follows from (\ref{eq:idealintersection}).
This vanishing property shows that the block matrices
$\,\hat \Theta_{\pi_1} \oplus\, \cdots \,\oplus \hat \Theta_{\pi_k}$
derived above are, in fact, critical points of $L_u(\Theta)$.

At this point, we know that (\ref{eq:proposedcount})
is a lower bound for the number of critical points.
The final step in our proof is to show that
no further critical points exist. To see this,
let $\hat \Theta$ be any critical point of
the parametric log-likelihood (\ref{eq:logliketheta}).
Suppose that the support of $\hat \Theta$ is not
contained in any proper block structure.
Then its fiber over $\mathcal{V}_n$
consists of $2^{n-1}$ distinct matrices, 
which are reduced points in that fiber.
This implies that the common  image in $\mathcal{V}_n$ of the
$2^{n-1}$ matrices 
is a critical point of (\ref{eq:loglikep}).
Hence $\hat \Theta$ has
been counted  in (\ref{eq:proposedcount}),
by the summand for $\pi = \{[n]\}$.
If $u$ is generic then we can 
conclude that $\hat \Theta$ has no zero coordinates.

It remains to consider critical points $\hat \Theta$ 
that conform to the block structure for some partition  $K \cup L = [n]$,
i.e.~$\hat \theta_{kl} = 0$ for all $k \in K$ and $l \in L$.
We now apply the previous argument inductively
to the respective blocks $\hat \Theta_K$ and $\hat \Theta_L$,
and eventually we arrive at
$\hat \Theta =  \hat \Theta_{\pi_1} \oplus \cdots \oplus \hat \Theta_{\pi_k}$
for some partition $\pi$ of $[n]$.
This means that $\hat \Theta$ was counted in (\ref{eq:proposedcount}).
\end{proof}

We have shown that, for generic data vectors $u$,
the support of each critical point $\hat \Theta$
is precisely given by one of the block structures.
This property can fail when $u$ is not generic.

\begin{example} \label{ex:hannah}
Fix $n=3$ and $u = (2, 1, 3, 7, 9, 10, 19, 22)$.
Then $L_u$ has $59$ distinct critical points,
as in Example \ref{ex:part3},
with $52$  from the trivial partition $\pi = \{123\}$.
One of these~is
$$ \hat \Theta \,=\, \begin{small}
\begin{bmatrix}
2 & 0 & 2 \\
0 & 4 & 3 \\
2 & 3 & 7
\end{bmatrix} .\end{small}
$$
The zero entry $\hat \theta_{12}= 0$ is accidental, not due to
any block structure.  Here, $u$ is not generic.
\end{example}

It is now instructive to revisit the 
implicit formulation of our MLE problem.
We seek points $p$ on the hyperdeterminant 
$V({\rm Det}) \subset \PP^7 $ such that the following matrix has rank~$\leq 2$:
\begin{equation}
\label{eq:3by8}
\begin{bmatrix}
u_\emptyset & u_1 & u_2 & u_3 & u_{12} & u_{13} & u_{23} & u_{123}
 \\
p_{000} & p_{100} & p_{010} & p_{001} & p_{110} & p_{101} & p_{011} & p_{111} \smallskip \\
p_{000} \frac{\partial{{\rm Det}}}{\partial{p_{000}}} &
p_{100} \frac{\partial{{\rm Det}}}{\partial{p_{100}}} &
p_{010} \frac{\partial{{\rm Det}}}{\partial{p_{010}}} &
p_{001} \frac{\partial{{\rm Det}}}{\partial{p_{001}}} &
p_{110} \frac{\partial{{\rm Det}}}{\partial{p_{110}}} &
p_{101} \frac{\partial{{\rm Det}}}{\partial{p_{101}}} &
p_{011} \frac{\partial{{\rm Det}}}{\partial{p_{011}}} &
p_{111} \frac{\partial{{\rm Det}}}{\partial{p_{111}}} 
\end{bmatrix}.
\end{equation}
We require each coordinate
 $p_{ijk}$ to be  non-zero, and also
$\sum_{ijk} p_{ijk} \not= 0$.
We further disallow $p$ to lie in the singular locus of 
$V({\rm Det})$, i.e.~the three  flattenings of
the $2 \times 2 \times 2$ tensor $p$ are $2 \times 4$ matrices of rank $2$.
This system has $13$ solutions, even for
the special $u$ in Example~\ref{ex:hannah}.

\section{Numerical Computations}
\label{sec4}

We now discuss the solution of the likelihood equations
using methods from numerical algebraic geometry.
For our computations we use the software
{\tt HomotopyContinuation.jl} due to
Breiding and Timme \cite{BT}, along
with the certification feature  in \cite{BRT}.
Our approach is based on the
 monodromy method for rational likelihood equations
 that was developed in \cite{ABF, ST}.

The underlying idea is as follows. We consider the likelihood equations
$\nabla L_u(\Theta) = 0$ where both $u$ and $\Theta$ are unknowns.
These define the {\em likelihood correspondence} \cite[Definition 1.5]{HS}.
In our situation, the likelihood correspondence has many irreducible
components, one for each set partition $\pi \in \mathcal{P}_n$.
This is the geometric interpretation of Theorem \ref{thm:decoupling}.
We wish to focus on the main component, for $\pi = \{[n]\}$,
which comprises the critical points of (\ref{eq:loglikep}) restricted to $\mathcal{V}_n$.
Luckily, numerical algebraic geometry does this for us automatically.

The likelihood equations are linear in $u$.
We thus can fix a random complex matrix $\Theta$, 
and then solve for a matching $u$. Afterwards, we fix $u$
and we vary $\Theta$. By running monodromy loops
in {\tt HomotopyContinuation.jl},
one eventually finds all solutions $\hat \Theta$ to 
$\nabla L_u(\Theta) = 0$  for that fixed $u$.
Here ``all'' means all critical points of  (\ref{eq:loglikep}) on $\mathcal{V}_n$,
because the monodromy loops stay on the main
irreducible component of the likelihood correspondence.

The program terminates after a  heuristic criterion
is satisfied. If this happens, then we can be confident that
all solutions have been found, and that the number of
solutions is equal to $\mu_n = {\rm MLdegree}(\mathcal{V}_n)$.
However,  there is still a tiny
chance that some solutions have been missed, which would mean that
the true $\mu_n$ is a little larger than the current count.
At this stage, we apply the command {\tt certify}
which generates a proof, based on interval arithmetic,
that all floating-point approximations that were found are, in fact,
distinct solutions \cite{BRT}. 

The pipeline described above proves that the number we found
is a lower bound for $\mu_n$. To prove that it is also an upper bound,
one would need some insights from intersection theory.
But this is still missing for many statistical models, including the one treated in this paper.
The process described above is quite fast for $n=4$, and it yields the following result.

\begin{proposition}
The ML degree of the DPP model $\,\mathcal{M}_4$ satisfies $\mu_4 \geq 3526$.
Based on our numerical computation, we are confident that $\mu_4 = 3526$.
\end{proposition}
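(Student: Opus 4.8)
The plan is to certify a lower bound by numerical algebraic geometry and then to explain why the heuristic termination of the monodromy solver makes the equality overwhelmingly plausible. First I would replace the $2$-to-$1$ principal-minor parametrization (\ref{eq:DPPpara}) by a birational parametrization of $\mathcal{V}_4$, so that the parametric likelihood equations become rational equations in the $10 = \binom{5}{2}$ unknowns whose solutions are in bijection with the critical points of (\ref{eq:loglikep}) on $\mathcal{V}_4$; this removes the overcounting by the $2^{n-1}$ sign clusters and keeps the degree count honest. Following \cite{ABF, ST}, I would then work on the likelihood correspondence $\nabla L_u(\Theta)=0$, exploiting that the equations are linear in the data $u$: pick a generic complex $\Theta_0$, solve the resulting linear system for a matching $u_0$, and record the pair $(\Theta_0,u_0)$ as a seed on the main component, the one indexed by $\pi=\{[4]\}$ in Theorem \ref{thm:decoupling}.

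Second, I would run monodromy loops in {\tt HomotopyContinuation.jl} that fix $u_0$ and move $\Theta$ around closed paths in data space, tracking the known solutions and harvesting the new ones into which they are permuted. Because these loops never leave the main irreducible component of the likelihood correspondence, every point produced is a critical point of (\ref{eq:loglikep}) on $\mathcal{V}_4$, and conversely every such point is eventually reached. I would iterate until the solution count stabilizes and a trace test (or the built-in heuristic stopping criterion) is satisfied, arriving at $3526$ floating-point solutions with nonzero coordinates and nonzero partition function. Finally I would invoke the {\tt certify} command of \cite{BRT}: interval arithmetic (Krawczyk / Smale's $\alpha$-theory) yields a rigorous proof that each of the $3526$ approximations refines to a genuine, pairwise distinct solution of the likelihood equations. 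This certification is exactly what upgrades the numerics to the honest inequality $\mu_4 \geq 3526$.

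The main obstacle is the matching upper bound $\mu_4 \leq 3526$, i.e.\ proving that no critical point has been missed. The monodromy/trace-test termination is only a heuristic — highly reliable in practice, but it does not by itself exclude solutions lying on unvisited sheets. As the surrounding discussion notes, a genuine proof of the upper bound would require intersection-theoretic input: a closed formula for the ML degree of $\mathcal{V}_n$, or at least an Euler-characteristic computation for the very affine open stratum of $\mathcal{V}_4$, and such input is not currently available for this model. A secondary, purely practical obstacle is scale: with degree in the thousands one must choose the birational chart and the monodromy parameters carefully so that path tracking stays numerically stable and the certification step remains feasible. Absent the intersection theory, $\mu_4 = 3526$ is therefore stated as a confident numerical conclusion rather than a theorem, precisely as phrased in the proposition.
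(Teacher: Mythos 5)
Your proposal matches the paper's argument essentially verbatim: a birational reparametrization to collapse the $2^{n-1}$-to-$1$ principal minor map, monodromy loops on the main component of the likelihood correspondence via {\tt HomotopyContinuation.jl}, certification of the $3526$ distinct solutions by interval arithmetic to obtain the rigorous lower bound, and the acknowledgment that the upper bound would require currently unavailable intersection-theoretic input. This is exactly the pipeline the paper describes, so no further comparison is needed.
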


The principal minor map is $2^{n-1}$-to-$1$.
For our computations 
we use a  reparametrization which makes the map $1$-to-$1$, 
reducing the number of paths to be
tracked in {\tt HomotopyContinuation.jl}
 by a factor of $2^{n-1}$.
We first show the new coordinates for~$n=3$.

\begin{example}[Birational Reparametrization]\label{ex:reparametrization}
  We reparametrize our matrix 
\begin{equation}
\label{eq:threebythree}
  \Theta \,\,=\,\,
\begin{small} 
\begin{bmatrix}
      \theta_{11} & \theta_{12} & \theta_{13}\\
      \theta_{12} & \theta_{22} & \theta_{23}\\
      \theta_{13} & \theta_{23} & \theta_{33}      
    \end{bmatrix}
    \end{small}
\end{equation}
  so that the principal minor map 
$
\Theta     \mapsto 
    \bigl(  \theta_{11}, \theta_{22}, \theta_{33}, \theta_{11}\theta_{22} {-} \theta_{12}^2, \theta_{11}\theta_{33} {-} \theta_{13}^2, \theta_{22}\theta_{33} {-} \theta_{23}^2, 
    {\rm det}(\Theta) \bigr)
$
becomes injective by  replacing the monomials
  $\theta_{12}^2, \theta_{13}^2, \theta_{23}^2, \theta_{12}\theta_{13}\theta_{23}$ with new variables.
    These four monomials are algebraically dependent, so we introduce three new variables:
$\   x_{12} = \theta_{12}^2$,
$    x_{13} = \theta_{13}^2$, and
$    x_{23} = \theta_{12}\theta_{13}\theta_{23}.$
Solving for the $\theta_{ij}$, we now
substitute the following into~(\ref{eq:threebythree}):
  \begin{align*}
    \theta_{12} \,=\,  \sqrt{x_{12}} && 
    \theta_{13} \,=\, \sqrt{x_{13}} &&
    \theta_{23} \,=\, x_{23}/\sqrt{x_{12}x_{13}}.
  \end{align*}
  This yields a birational map between $\CC^6$ 
  and the hypersurface $V({\rm Det})$ in $\PP^7$.
  \end{example}

The general case is similar. For $n \geq 4$, we
 replace all off-diagonal parameters as follows:
$$
\hbox{For $i \not= j$ we set} \quad
  \theta_{ij} \,= \,
  \begin{cases}\,
    \sqrt{x_{ij}} &\quad \text{if $i = 1$},\\ 
\,    x_{ij}/\sqrt{x_{1i}x_{1j}} &\quad \text{otherwise.}
  \end{cases}
$$
After this, the log-likelihood  $L_u$ is
a function in the $n$ diagonal entries $\theta_{ii}$
and the $\binom{n}{2}$ new variables $x_{ij}$.
Its partial derivatives give a system of
$\binom{n+1}{2}$  rational function equations in $\binom{n+1}{2}$ variables.
We now use the command \verb+monodromy_solve+ to solve this system.
We find and certify $13$ complex solutions for $n = 3$ and $3526$ complex solutions for $n = 4$.
The $n = 3$ computations run in under a second. For $n = 4$, the computation takes about $20$~minutes. 
These times can be improved significantly by using multiple threads in {\tt Julia}.

In the statistical application to DPP, we seek critical points that are real, not complex.
Ideally, we want $\hat \Theta$ to be positive-definite.
We ran the above computation on many data vectors $u$,
with the aim of maximizing the number of real critical points.
Here is one winner:

\begin{example}[11 Positive-Definite Critical Points]\label{ex:11real}
Fix the data
   $u= (1, 5, 5, 5, 5, 5, 5, 1)$.
The likelihood function $L_u$ has two complex critical points.
The remaining $11$ critical points correspond to positive-definite matrices. 
  Nine come from by permuting indices on three $\hat \Theta$:
  \begin{align*} \!\!
    \begin{footnotesize}
      \begin{array}{ll} 
        \hat \theta_{11} = 6.0   \! & \! \hat \theta_{12} = 4.4721360\\
        \hat \theta_{13} = 4.472136 \! &\! \hat \theta_{22} = 3.8888889\\
        \hat \theta_{23} = 3.111111 \! & \! \hat \theta_{33} = 3.8888889
      \end{array}
    \end{footnotesize}
      && 
    \begin{footnotesize}
      \begin{array}{ll}
        \hat \theta_{11} = 2.142857\! & \! \hat \theta_{12} = 0.8571429 \\
        \hat \theta_{13} = 2.236068 \! & \! \hat \theta_{22} = 2.1428571\\
        \hat \theta_{23} = 2.236068 \! & \! \hat \theta_{33} = 3.5
      \end{array}
    \end{footnotesize}
 & & 
    \begin{footnotesize}
      \begin{array}{ll}
        \hat \theta_{11} = 5.0  \! &\! \hat \theta_{12} = 3.4641016 \\
        \hat \theta_{13} = 3.4641016  \! &\! \hat \theta_{22} = 3.0 \\
        \hat \theta_{23} = 2.0  \! &\! \hat \theta_{33} = 3.0.
      \end{array}
    \end{footnotesize}
  \end{align*}
The remaining two critical points $\hat p \in \mathcal{M}_3$ are invariant under permuting the indices $1,2,3$:
\begin{align*}
  \begin{footnotesize}
      \begin{array}{ll}
        \hat \theta_{11} = 5.652906131 & \hat \theta_{12} = 5.265758657 \\
        \hat \theta_{13} = 5.265758657  & \hat \theta_{22} = 5.652906131\\
        \hat \theta_{23} = 5.265758657  & \hat \theta_{33} = 5.652906131
      \end{array}
  \end{footnotesize}
  && \quad &&
    \begin{footnotesize}
      \begin{array}{ll}
        \hat \theta_{11} = 1.742592619   & \hat \theta_{12} = -0.840402407\\
        \hat \theta_{13} = -0.840402407  & \hat \theta_{22} = 1.742592619\\
        \hat \theta_{23} = -0.840402407 & \hat \theta_{33} = 1.742592619.
      \end{array}
    \end{footnotesize}
\end{align*}
Among the $11$ critical points, five are local maxima, two of which are global maxima.
The global maxima come from the matrices that are invariant under permuting indices, i.e.~the matrices with constant diagonal and off-diagonal entries.
The log-likelihood evaluates to $-63.46051485$ at these two points.
The other local maxima come from permutations of~the matrix with $\hat\theta_{11} = 5.0$.
The log-likelihood evaluates to $-63.63109767$ at these three points. 
\end{example}

We now turn to the case $n=4$, where 
numerical accuracy is already a notable challenge.

\begin{example}[$n=4$]
  We fix
$u \,=\, (u_\emptyset, u_1,\ldots,u_4, u_{12}, \ldots,u_{34},
u_{123}, \ldots, u_{234}, u_{1234})$ to be
$$ u \,\,=\,\,
(1,12,12,12,12,12,12,12,
12,12,12,12,12,12,12,1).
$$
We certified $3221$ complex critical points $\hat p$ for the function $L_u$ on the $10$-dimensional
variety $\mathcal{V}_4$. Among these $3221$, the global maximum is 
given by the principal minors of the matrix
$$
\hat \Theta \,\, =\,\, \begin{small}
\begin{bmatrix}
6.5      & 5.5      & 5.97913  & 5.97913  \\
5.5      & 6.5      & 5.97913  & 5.97913  \\
5.97913  & 5.97913  & 6.5      & 5.5 \\
5.97913  & 5.97913  & 5.5      & 6.5
\end{bmatrix}. \end{small}
$$
For the other $305 = 3526-3221$ solutions,  more careful path-tracking
with homotopy methods is needed.
We found $315$ of our critical points to be real. Only
$180$ have real preimages $\hat \Theta$ under the
maximal minor map. Among these,
 $104$ come from positive-definite matrices.
These $104$ are the statistically meaningful critical points.
They include five local maxima.
\end{example}

We conclude this paper by reporting on our computations for $n=5$.
We use the birational parametrization in
Example \ref{ex:reparametrization}.
Our system consists of $15$ rational function equations in $15$ unknowns,
namely the variables $x_{ij}$ of our birational parametrization
and the diagonal entries $\theta_{ii}$ of a 
$5 \times 5$ matrix $\Theta$.
Using 256 threads, we apply \verb+monodromy_solve+ to the $15 \times 15$ system of partial derivatives of $L_u$.
In six days, we already found 29.5 million solutions. 
Hence the ML degree satisfies $\mu_5 \geq 29,500,000$.
Determining $\mu_5$ is a  future project.

In spite of these challenges, we are optimistic that
numerical algebraic geometry will offer some
solutions also for $n \geq 6$. We can run monodromy loops until
a heuristic stopping criterion is satisfied. At that point we
will have gathered a large sample of local maxima
of the likelihood function, complementing those
one finds with local hill-climbing methods.

\bigskip

		\bigskip
		\smallskip

		\noindent {\bf Authors' addresses:}
		
		\smallskip
			
		\noindent Hannah Friedman, UC Berkeley
				\hfill \url{hannahfriedman@berkeley.edu}
		
		\noindent  Bernd Sturmfels, MPI-MiS Leipzig \hfill \url{bernd@mis.mpg.de}

		\noindent  Maksym Zubkov, UC Berkeley \hfill \url{mzubkov@berkeley.edu}

\end{document}